\newtheorem{theorem}{Theorem}[section]
\newtheorem{corollary}[theorem]{Corollary}
\newtheorem{example}[theorem]{Example}
\newtheorem{lemma}[theorem]{Lemma}
\newcommand{\bK}{{\mathbf K}}
\newcommand{\oR}{{\mathbb R}}
\newcommand{\tfmax}{\widehat{f}_{\max}}
\newcommand{\fminK}{f_{\min,\mathbf{K}}}
\newcommand{\oN}{{\mathbb N}}
\newcommand{\vol}{\text{\rm vol}}
\begin{document}
%%%%%%%%%%%%%%%%

\title{Comparison of Lasserre's measure--based  bounds for polynomial optimization to bounds obtained by simulated annealing}

\author{Etienne de Klerk \thanks{Tilburg University and Delft University of Technology, \texttt{E.deKlerk@uvt.nl}}
 \and Monique Laurent \thanks{Centrum Wiskunde \& Informatica (CWI), Amsterdam and Tilburg University, \texttt{monique@cwi.nl}}}

\maketitle
%%%%%%%%%%%%%%%%%%%%%%%%%%%%%%%%%%%%%%%%%%%%%%%%%%%%%%%%%%%%%%%%%%%%%%

\begin{abstract}
We consider the problem of minimizing  a continuous function $f$  over a compact set $\bK$.
We compare the hierarchy of  upper bounds proposed by Lasserre in [{\em SIAM J. Optim.} $21(3)$ $(2011)$, pp. $864-885$]
to bounds that may be obtained from simulated annealing.

We show that, when $f$ is a polynomial and $\bK$ a convex body, this comparison yields a faster rate of convergence of the Lasserre hierarchy than
what was previously known in the literature. % Enter your abstract
\end{abstract}

\noindent
{\bf Keywords:} {Polynomial optimization; Semidefinite optimization;  Lasserre hierarchy; simulated annealing} \\
{\bf AMS classification:} {90C22; 90C26; 90C30} \\

\section{Introduction }\label{secintro}

We consider the problem of minimizing a continuous function $f:\oR^n\to\oR$ over a compact set $\mathbf{K}\subseteq \oR^n$. That is, we consider the problem of computing the parameter:

\begin{equation*}\label{fmink}
f_{\min,\mathbf{K}}:= \min_{x\in \mathbf{K}}f(x).
\end{equation*}
Our goal is to compare two convergent hierarchies of upper bounds on $\fminK$, namely measure-based  bounds introduced by Lasserre \cite{Las11}, and simulated annealing bounds, as studied by Kalai and Vempala \cite{Kalai-Vempala 2006}.
The bounds of Lasserre are obtained by minimizing over measures on $\bK$ with sum-of-squares polynomial density functions with growing degrees, while simulated annealing bounds use Boltzman distributions on $\bK$ with decreasing temparature parameters.

In this note we establish a relationship between these two approaches, linking the degree and temperature parameters in the two bounds (see Theorem \ref{thm:main} for a precise statement).
As an application,  when $f$ is a polynomial and $K$ is a convex body,
 we can show a faster convergence rate for the measure-based bounds of Lasserre.
  The new convergence rate is in $O(1/r)$  (see Corollary~\ref{cor:nonconvex}),
  where $2r$ is  the degree of the sum-of-squares polynomial density function,
  while the dependence was in $O(1/\sqrt{r})$  in the previously best known result from \cite{KLS MPA}.

\medskip
Polynomial optimization is a very active research area in the recent years since the seminal works of Lasserre \cite{Las01}
and Parrilo \cite{Par} (see also, e.g., the book \cite{Las09} and the survey \cite{Lau09}). In particular, hierarchies
 of (lower and upper) bounds for the parameter $\fminK$ have been proposed, based on sum-of-squares  polynomials and semidefinite programming.
 % When $\bK$ is semi-algebraic, defined by polynomial inequalities, the lower bounds are defined by
%   searching for the largest scalar $\lambda$ for which $f-\lambda$ admits a sum-of-squares decomposition in terms
%   of the polynomial constraints defining $\bK$. Under mild conditions, these bounds converge asymptotically to
% $\fminK$ and finite convergence holds generically \cite{Nie13}.
%On the other hand finding tight estimates on the convergence rate remains  challenging,
% as the current known results (e.g., in \cite{Sch,NS}) do not match the good practical performance of the bounds.

For a general compact set $\bK$, upper bounds for $\fminK$ have been introduced by Lasserre \cite{Las11}, obtained by searching for a sum-of-squares polynomial density function of given maximum degree $2r$, so as to minimize the integration of $f$ with respect to the corresponding probability measure on $\bK$.
When $f$ is Lipschitz continuous and under some mild assumption on $\bK$ (which holds, e.g., when $\bK$ is a convex body), estimates for the convergence rate of these bounds have been proved in \cite{KLS MPA} that are in order $O(1/\sqrt r)$. Improved rates have been subsequently shown when restricting to special sets $\bK$. Related stronger results have been shown for the case when $\bK$ is the hypercube $[0,1]^n$ or $[-1,1]^n$.
In \cite{KLLS MOR} the authors show a hierarchy of upper bounds using the Beta distribution, with the same convergence rate in $O(1/\sqrt{r})$,
but whose computation needs only elementary operations; moreover an improved convergence in $O(1/r)$ can be shown, e.g.,  when $f$ is quadratic.
In addition, a convergence rate in $O(1/r^2)$ is shown in \cite{DHL SIOPT}, using distributions based on Jackson kernels
and a larger class of sum-of-squares density functions.

%For convex polynomial optimization, when $f$ is a convex polynomial and $\bK$ is a convex semi-algebraic set, it
%has been shown that the hierarchy of Lasserre {\em lower bounds} enjoys {\em finite} convergence, see \cite{Las09 SIOPT} and \cite{KL SIOPT}.

In this paper we investigate the hierarchy of measure-based upper bounds of \cite{Las11} and show that when $K$ is a convex body,  convexity can be exploited to show an improved convergence rate in $O(1/r)$, even for nonconvex functions.
  The key ingredient for this is to establish a relationship with upper bounds based on simulated annealing and to
  use a known  convergence rate result from \cite{Kalai-Vempala 2006} for simulated annealing bounds in the convex case.

\medskip
Simulated annealing was introduced by Kirkpatrick et al. \cite{Kirkpatrick et al 1983} as a randomized search procedure for general optimization
problems.
 It has enjoyed renewed interest  for convex optimization problems since it was shown
by Kalai and Vempala \cite{Kalai-Vempala 2006} that a polynomial-time implementation is possible.
This requires so-called hit-and-run sampling from $\mathbf{K}$, as
introduced by Smith \cite{Smith 1984}, that was shown to be a polynomial-time procedure by Lov\'asz \cite{Lovasz1999}.
Most recently, Abernethy and Hazan \cite{Abernethy_Hazan_2015} showed formal equivalence with a certain interior point
method for convex optimization.

This unexpected equivalence between seemingly different methods has motivated this current work to relate the bounds by Lasserre \cite{Las11}
 to the simulating annealing bounds as well.

\medskip
In what follows, we first introduce the measure-based upper bounds of Lasserre~\cite{Las11}. Then we recall the bounds based on simulated annealing and the known convergence results for a linear objective function $f$, and we give an explicit proof of their extension to the case of a general convex function $f$. After that we state our main result and
the next section is devoted to its proof.
In the last section we conclude with numerical examples showing the quality of the two types of bounds and some final remarks.

\section{Lasserre's hierarchy of upper bounds}
Throughout,  $\oR[x]=\oR[x_1,\dots,x_n]$ is the set of polynomials in $n$ variables with real coefficients and, for an integer $r\in \oN$, $\oR[x]_r$ is the set of polynomials with degree at most $r$. Any polynomial $f\in \oR[x]_r$ can be written $f=\sum_{\alpha\in N(n,r)} f_\alpha x^\alpha$, where we set $x^\alpha=\prod_{i=1}^nx_i^{\alpha_i}$ for $\alpha\in \oN^n$ and  $N(n,r)=\{\alpha\in \oN^n: \sum_{i=1}^n\alpha_i\le r\}$.
We let $\Sigma[x]$ denote the set of sums of squares of polynomials, and $\Sigma[x]_r=\Sigma[x]\cap \oR[x]_{2r}$ consists of all  sums of squares of polynomials with degree at most $2r$. %\smallskip\noindent
%For continuous functions which are nonnegative over some compact set $\mathbf{K}\subseteq \oR^n$,

%In Theorem \ref{thmlasnn}, setting
\medskip We recall the following reformulation for $f_{\min,\mathbf{K}}$, established by Lasserre \cite{Las11}:

\begin{equation*}\label{fminkreform2}
f_{\min,\mathbf{K}}=\inf_{h\in\Sigma[x]}\int_{\mathbf{K}}h(x)f(x)dx \ \ \mbox{s.t. $\int_{\mathbf{K}}h(x)dx=1$.}
\end{equation*}

%We will assume w.l.o.g.\ that $f_{\min,\mathbf{K}} \ge 0$.

\smallskip
\noindent
By bounding the degree of the polynomial $h\in \Sigma[x]$ by $2r$, we can define  the parameter:

\begin{eqnarray}\label{fundr}
\underline{f}^{(r)}_{\mathbf{K}}:=\inf_{h\in\Sigma[x]_r}\int_{\mathbf{K}}h(x)f(x)dx \ \ \mbox{s.t. $\int_{\mathbf{K}}h(x)dx=1$.}
\end{eqnarray}

\smallskip
\noindent
Clearly,  the inequality  $f_{\min,\mathbf{K}}\le\underline{f}^{(r)}_{\mathbf{K}}$ holds for all $r\in\oN$.  Lasserre \cite{Las11} gave conditions under which  the infimum is attained in the program (\ref{fundr}). De Klerk, Laurent and Sun \cite[Theorem 3]{KLS MPA} established the following rate of convergence for the bounds
$\underline{f}^{(r)}_{\mathbf{K}}$.

\begin{theorem}[De Klerk, Laurent, and Sun \cite{KLS MPA}] \label{thm:dKLS}
Let $f\in \oR[x]$ and  $\mathbf{K}$ a convex body. There exist constants $C_{f,\bK}$ (depending only on $f$ and $\bK$) and $r_\bK$ (depending only on $\bK$) such that
\begin{equation}\label{thmmaineq2}
\underline{f}^{(r)}_{\mathbf{K}}-f_{\min,\mathbf{K}} \le {C_{f,\bK} \over \sqrt {r}}\ \ \text{ for all } r\ge r_\bK.
\end{equation}
 That is, the  following asymptotic convergence rate holds: $\underline{f}^{(r)}_{\mathbf{K}}-f_{\min,\mathbf{K}} \simeq O\left( {1\over \sqrt r}\right).$
\end{theorem}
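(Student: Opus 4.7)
My plan is to exhibit a single explicit SOS density $h_r\in\Sigma[x]_r$ concentrated at a global minimizer $x^\ast\in\bK$ of $f$ and to bound $\int_\bK h_r(x)f(x)\,dx - f(x^\ast)$ directly; since $\underline{f}^{(r)}_\bK$ is the infimum over all such densities, this will give an upper estimate on $\underline{f}^{(r)}_\bK - \fminK$. Because $f$ is a polynomial and $\bK$ is compact, $f$ is Lipschitz continuous on $\bK$ with some constant $L_f$, so by $|f(x)-f(x^\ast)|\le L_f\|x-x^\ast\|$ the task reduces to showing $\int_\bK\|x-x^\ast\|\,h_r(x)\,dx = O(1/\sqrt r)$.

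The natural candidate is a polynomial ``bump'' at $x^\ast$. I would set $D\ge\mathrm{diam}(\bK)$, $m=\lfloor r/2\rfloor$ and take
$$q_r(x):=\left(1-\frac{\|x-x^\ast\|^2}{D^2}\right)^{m},\qquad h_r(x):=\frac{q_r(x)^2}{\int_\bK q_r(y)^2\,dy}.$$
Then $q_r$ has degree $2m\le r$, so $h_r\in\Sigma[x]_r$ is a probability density on $\bK$, and everything reduces to controlling the ratio
$$R_r\;:=\;\frac{\int_\bK \|x-x^\ast\|\,q_r(x)^2\,dx}{\int_\bK q_r(x)^2\,dx}.$$

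For the numerator, I would use the elementary inequality $(1-t)^{2m}\le e^{-2mt}$ on $[0,1]$ to get $q_r(x)^2\le \exp(-2m\|x-x^\ast\|^2/D^2)$, and then extend the integral from $\bK$ to all of $\oR^n$; a Gaussian first-moment computation gives the bound $\int_\bK\|x-x^\ast\|\,q_r(x)^2\,dx\le C_n\,D^{n+1}\,r^{-(n+1)/2}$. For the denominator, in the easy case where $x^\ast$ lies in the interior of $\bK$ with a fixed inscribed ball $B(x^\ast,\rho)\subseteq\bK$, a direct calculation over $B(x^\ast,D/\sqrt m)$ yields $\int_\bK q_r^2\,dx\gtrsim c_n(D/\sqrt m)^n$, and the ratio $R_r$ is already $O(1/\sqrt r)$.

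The hard part is the case where $x^\ast$ lies on the boundary of $\bK$: there the Gaussian peak of $q_r^2$ spills largely outside $\bK$, and one has to rule out that the denominator becomes too small to compensate the numerator. This is precisely where the convex-body hypothesis enters in a non-trivial way. Since $\bK$ is a convex body it contains some ball $B(a,\rho)$, and by convexity the solid cone $\mathrm{conv}(\{x^\ast\}\cup B(a,\rho))$ is contained in $\bK$; a linear change of variables aligning $a-x^\ast$ with one coordinate axis together with a Fubini decomposition should yield $\int_\bK q_r(x)^2\,dx\gtrsim c(\bK)\,r^{-n/2}$ uniformly in $x^\ast\in\bK$, with $c(\bK)$ depending only on $\rho$, $D$ and $n$. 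Combining the two estimates then gives the claimed $O(1/\sqrt r)$ bound with $C_{f,\bK}$ proportional to $L_f$ times a geometric factor depending only on $\bK$, and $r_\bK$ chosen just large enough that the scale $D/\sqrt r$ is dominated by the relevant geometric parameters of $\bK$ so that the asymptotic estimates take over.
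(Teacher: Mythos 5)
This theorem is not proved in the present paper; it is quoted from the reference \cite{KLS MPA}, so the comparison must be made against the argument given there. Your proposal is correct and follows the same overall architecture as that proof: exhibit an explicit sum-of-squares density peaked at a global minimizer $x^\ast$, reduce the numerator estimate to the first moment $\int_\bK \|x-x^\ast\|\,h_r(x)\,dx$ via the Lipschitz constant of $f$ on $\bK$, and lower-bound the normalizing integral using the geometry of $\bK$ (in \cite{KLS MPA} this is Assumption 1, verified for convex bodies by exactly the cone construction $\mathrm{conv}(\{x^\ast\}\cup B(a,\rho))\subseteq\bK$ that you invoke for the boundary case). The genuine difference is the choice of bump: \cite{KLS MPA} uses the normalized polynomial $\phi_{2r}\bigl(\|x-a\|^2/(2\sigma^2)\bigr)$, where $\phi_{2r}$ is the degree-$2r$ Taylor truncation of $e^{-\lambda}$ (shown there to be SOS, the same Lemma \ref{lemf2rsos} reused in this paper), with the width $\sigma$ optimized to get the $O(1/\sqrt r)$ rate; you instead use the Landau-type kernel $\bigl(1-\|x-x^\ast\|^2/D^2\bigr)^{2m}$, which is manifestly a perfect square and so dispenses with the SOS lemma entirely, while the estimate $(1-t)^{2m}\le e^{-2mt}$ recovers the same Gaussian-scale localization $D/\sqrt r$ and hence the same rate. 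Your route is arguably more elementary; the truncated-exponential route of \cite{KLS MPA} has the advantage of tying the density directly to Boltzmann distributions, which is precisely what the present paper exploits in Theorem \ref{thm:main}. The only places where your sketch leaves work to the reader are routine: the uniformity in $x^\ast$ of the cone-based lower bound $\int_\bK q_r^2\,dx\gtrsim c(\bK)(D/\sqrt m)^n$ (valid once $D/\sqrt m$ is below a scale depending only on $\bK$, which is where $r_\bK$ comes from) and the constant-factor bookkeeping; neither affects correctness.
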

This result of \cite{KLS MPA} holds in fact under more general assumptions, namely when $f$ is Lipschitz continuous and $\bK$ satisfies a technical assumption (Assumption 1 in \cite{KLS MPA}), which says (roughly) that around any point in $ \mathbf K$ there is a ball whose intersection with $\bK$ is at least  a constant fraction of the unit ball.

As explained in \cite{Las11}  the parameter $\underline{f}^{(r)}_{\mathbf{K}}$ can be computed using semidefinite programming, assuming  one knows  the moments $m_\alpha(\bK)$ of the Lebesgue measure on $\bK$, where
%For $\alpha\in\oN^n$,  the {\em moment} of order $\alpha$ of the Lebesgue measure on $\mathbf{K}$ is
\begin{equation*}\label{mack}
m_{\alpha}(\mathbf{K}):=\int_{\mathbf{K}}x^{\alpha}dx\ \ \ \mbox{ for } \alpha\in \oN^n.
\end{equation*}
%and $x^{\alpha}:=\prod_{i=1}^nx_i^{\alpha_i}$.
\smallskip
\noindent
Indeed suppose $f(x)=\sum_{\beta\in N(n,d)}f_{\beta}x^{\beta}$ has degree $d$.
Writing  $h\in\Sigma[x]_{r}$ as $h(x)=\sum_{\alpha\in N(n,2r)}h_{\alpha}x^{\alpha}$,  the parameter $\underline{f}^{(r)}_{\mathbf{K}}$ from
 (\ref{fundr}) can be reformulated as follows:
\begin{eqnarray}\label{eqSDP}
\underline{f}^{(r)}_{\mathbf{K}}&=&\min\sum_{\beta\in N(n,d)}f_{\beta}\sum_{\alpha\in N(n,2r)}h_{\alpha}m_{\alpha+\beta}(\mathbf{K})\label{fundr2}\\
 & &\mbox{ s.t. } \ \ \sum_{\alpha\in N(n,2r)}h_{\alpha}m_{\alpha}(\mathbf{K})=1,\nonumber\\
&&\ \ \ \ \ \ \ \sum_{\alpha\in N(n,2r)}h_{\alpha}x^{\alpha}\in\Sigma[x]_r.\nonumber
\end{eqnarray}
Since the sum-of-squares condition on $h$ may be written as a linear matrix inequality, this is a semidefinite program.
In fact, since it only has one linear equality constraint, it may even be rewritten as a generalised eigenvalue problem.
In particular,
  $\underline{f}_{\mathbf{K}}^{(r)}$ {is equal to the   the smallest  generalized eigenvalue} of the system:
\[
Ax = \lambda Bx \quad \quad\quad (x \neq 0),
\]
where the symmetric matrices $A$ and $B$ are of order ${n + r \choose r}$ with rows and columns  indexed by $N(n,r)$,
and
\begin{equation}
\label{matrices A and B}
A_{\alpha, \beta} = \sum_{\delta \in N(n,d)} f_\delta \int_{\mathbf{K}} x^{\alpha + \beta + \delta} dx, \quad B_{\alpha, \beta} = \int_{\mathbf{K}} x^{\alpha + \beta} dx \quad \alpha, \beta \in {N}(n,r).
\end{equation}
For more details, see \cite{Las11,KLS MPA,KLLS MOR}.

\section{Bounds from simulated annealing}

Given a continuous function $f$, consider the associated Boltzman distribution over the set $\bK$, defined by the density function:
\[
\textstyle
	P_f(x) := \frac{\exp(-f(x))}
	{ \int_{\bK} \exp(-f(x') ) \, dx' }.
\]
Write
$
X \sim P_f
$
if the random variable $X$ takes values in $\bK$ according to the Boltzman distribution.

The idea of simulated annealing is to sample $X \sim P_{f/t}$ where $t > 0$ is a fixed `temperature' parameter, that is subsequently decreased.
Clearly, for any $t>0$,  we have
\begin{equation}\label{eqBoltz}
\fminK \le  \mathbb{E}_{X \sim P_{f/t}}[f(X)].
\end{equation}
The point is that, under mild assumptions, these bounds converge to the minimum of $f$ over $\bK$ (see, e.g., \cite{Spall}):
\[
\lim_{t \downarrow 0} \mathbb{E}_{X \sim P_{f/t}}[f(X)] = f_{\min,\mathbf{K}}.
\]
The key step in the practical utilization of theses bounds is therefore to perform the sampling of $X \sim P_{f/t}$.

\medskip
\begin{example}
\label{ex:1}
Consider the minimization of the Motzkin polynomial $$f(x_1,x_2)=64(x_1^4x_2^2+x_1^2x_2^4) - 48x_1^2x_2^2 +1$$ over  $\bK = [-1,1]^2$,
where there are four global minimizers at the points
$\left(\pm \frac{1}{2},\pm \frac{1}{2}\right)$, and $f_{\min,\bK} = 0$.
Figure \ref{figure:motzkin_SA} shows the
corresponding Boltzman  density function for $t = \frac{1}{2}$.
Note that this  density has four modes, roughly positioned at the four global minimizers of $f$ in $[-1,1]^2$.
The corresponding upper bound on $f_{\min,\bK} = 0$ is $\mathbb{E}_{X \sim P_{f/t}}[f(X)] \approx 0.7257$ ($t = \frac{1}{2}$).

\begin{figure}[h!]
\begin{center}
\includegraphics[width=0.45\textwidth]{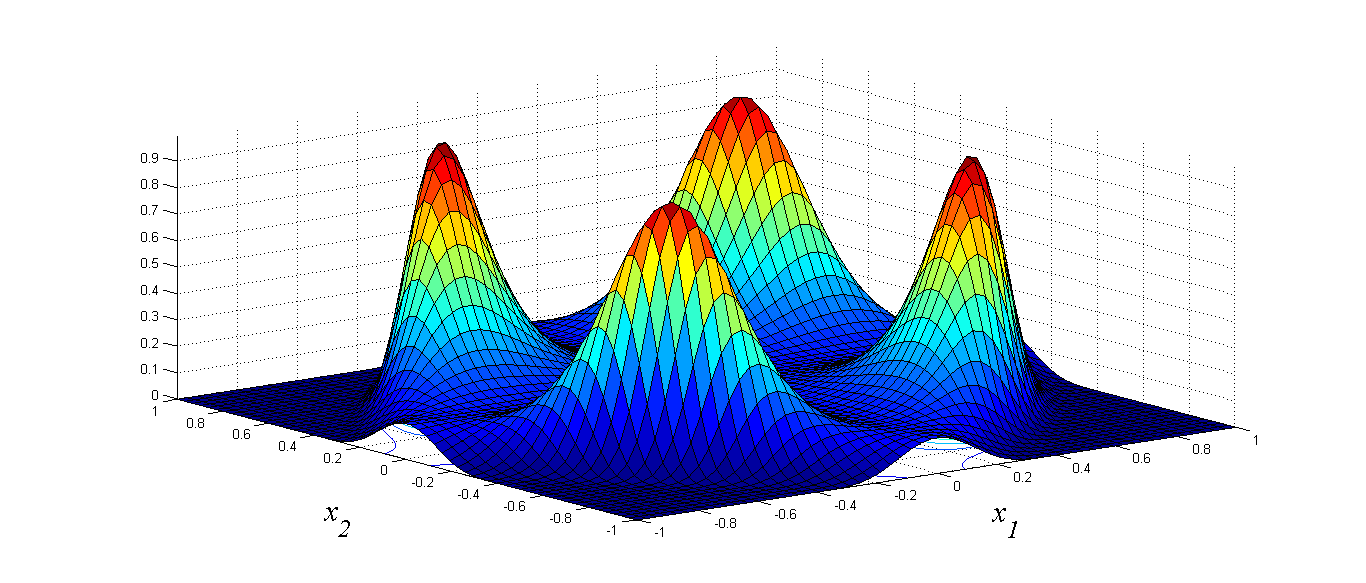}
\includegraphics[width=0.45\textwidth]{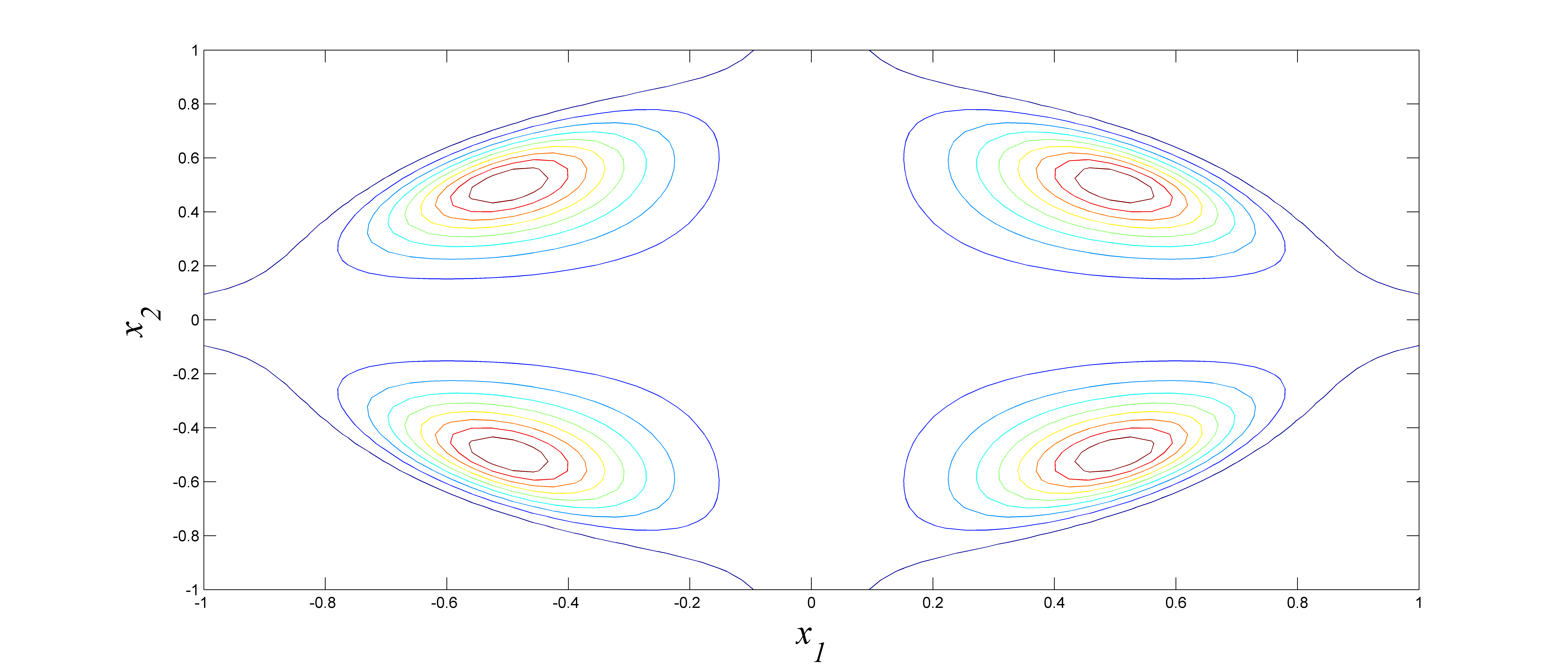}\\
  \caption{\label{figure:motzkin_SA}Graph and contours of the Boltzman density with $t = \frac{1}{2}$ for the Motzkin polynomial.}
\end{center}

\end{figure}

To obtain a better upper bound on $f_{\min,\bK}$ from the Lasserre hierarchy, one needs to
use a degree $14$ s.o.s.\ polynomial density; in particular, one has $\underline{f}^{(6)}_{\mathbf{K}}=0.8010$ (degree $12$) and $\underline{f}^{(7)}_{\mathbf{K}}= 0.7088$ (degree $14$). More detailed numerical results are given in Section \ref{sec:conclusion}.
\end{example}

\medskip
When $f$ is linear and $\bK$ a convex body, Kalai and Vempala \cite[Lemma 4.1]{Kalai-Vempala 2006}  show that the  rate of convergence of the bounds in (\ref{eqBoltz}) is linear in the temperature $t$.

\begin{theorem}[Kalai and Vempala \cite{Kalai-Vempala 2006}]
\label{thm:Kallai-Vempala}
Let  $f(x) = c^Tx$ where $c$ is a unit vector,
%\tht^\top x$ with $\|\tht \|_2 = 1$,
and let $\bK$ be a convex body. Then, for any $t>0$,  we have
\[
	 \mathop{\mathbb{E}}_{X \sim P_{f/t  }}[f(X)] - \min_{x \in \bK} f(x) \leq  n t.
\]
\end{theorem}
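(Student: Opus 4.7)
The plan is to exploit the scaling behavior of the partition function. First, by translating $\bK$, I would assume without loss of generality that the origin $0\in\bK$ is a minimizer of $c^{T}x$ on $\bK$, so that $\min_{x\in\bK} c^{T}x=0$ and $c^{T}x\ge 0$ throughout $\bK$. Setting $\beta=1/t$ and introducing the partition function
\[
F(\beta):=\int_{\bK} e^{-\beta c^{T}x}\,dx,
\]
the Boltzman density becomes $P_{f/t}(x)=e^{-\beta c^{T}x}/F(\beta)$, and differentiating under the integral sign gives the identity
\[
\mathbb{E}_{X\sim P_{f/t}}[c^{T}X]=-\frac{d}{d\beta}\log F(\beta).
\]
Thus the desired bound $\mathbb{E}[c^{T}X]\le nt=n/\beta$ is equivalent to $-\frac{d}{d\beta}\log F(\beta)\le n/\beta$, which is in turn equivalent to the assertion that the function $\beta\mapsto \beta^{n}F(\beta)$ is non-decreasing on $(0,\infty)$.

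The key step is then to prove this monotonicity by a change of variables. Applying $y=\beta x$ (with $dy=\beta^{n}\,dx$) inside $F(\beta)$ yields
\[
\beta^{n}F(\beta)=\int_{\beta\bK} e^{-c^{T}y}\,dy.
\]
Since the integrand is strictly positive and independent of $\beta$, it suffices to show that the domains satisfy $\beta_{1}\bK\subseteq \beta_{2}\bK$ whenever $0<\beta_{1}\le\beta_{2}$. This is exactly where convexity of $\bK$ together with the fact that $0\in \bK$ enter: for any $x\in\bK$,
\[
\beta_{1}x=\frac{\beta_{1}}{\beta_{2}}(\beta_{2}x)+\Bigl(1-\frac{\beta_{1}}{\beta_{2}}\Bigr)\cdot 0\in \beta_{2}\bK,
\]
as a convex combination of two points of $\beta_{2}\bK$. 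Hence $\beta^{n}F(\beta)$ is non-decreasing, and combining this with the identity above gives $\mathbb{E}_{X\sim P_{f/t}}[c^{T}X]\le n/\beta=nt$.

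I do not foresee a serious obstacle here: the proof rests on the single observation that the bound is equivalent to monotonicity of $\beta^{n}F(\beta)$, and that this monotonicity reduces, via a scaling of variables, to the trivial inclusion $\beta_{1}\bK\subseteq\beta_{2}\bK$ provided by convexity and $0\in\bK$. I would remark that the unit-norm hypothesis on $c$ plays no essential role in the argument; it only fixes the normalization of the statement, since scaling $c$ by $\lambda>0$ scales both sides of the inequality by $\lambda$.
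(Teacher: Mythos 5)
Your proof is correct, and it is essentially self-contained: the reduction of the bound to the monotonicity of $\beta\mapsto\beta^{n}F(\beta)$ via the log-partition function, followed by the change of variables $y=\beta x$ and the inclusion $\beta_{1}\bK\subseteq\beta_{2}\bK$ (using $0\in\bK$ at a minimizer and convexity), is a complete and valid argument. Note, however, that the paper does not prove this statement at all --- it is quoted verbatim as Lemma~4.1 of Kalai and Vempala and used as a black box --- so there is no in-paper proof to compare against; your argument is in fact the standard scaling argument underlying the original reference. One small imprecision in your closing remark: the right-hand side $nt$ does not depend on $c$, so rescaling $c$ by $\lambda$ does not scale ``both sides'' of the inequality; rather, replacing $c$ by $\lambda c$ at temperature $t$ is the same as using $c$ at temperature $t/\lambda$, so your proof actually establishes the bound $nt$ for an arbitrary (not necessarily unit) vector $c$.
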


We indicate how to extend the result of Kalai and Vempala in Theorem \ref{thm:Kallai-Vempala} to the case of an arbitrary convex function $f$.
This more general result is hinted at in \S6 of \cite{Kalai-Vempala 2006}, where the authors write
\begin{quote}
``... a statement
analogous to [Theorem 2] holds also for general convex functions ..."
\end{quote}
 but no precise statement is given there. In any event, as we will now show, the more general result may readily be derived from Theorem~\ref{thm:Kallai-Vempala} (in fact, from the special case of a linear coordinate function $f(x)=x_i$ for some $i$).

\begin{corollary}\label{corconvex}
Let $f$ be a convex function and let $\bK\subseteq \oR^n$ be a convex body.
Then, for any $t>0$, we have
\[
	 \mathop{\mathbb{E}}_{X \sim P_{f/t  }}[f(X)] - \min_{x \in \bK} f(x) \leq  n t.
\]
\end{corollary}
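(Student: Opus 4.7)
The plan is to reduce the convex case to the linear case of Theorem~\ref{thm:Kallai-Vempala} via an epigraph lift. For a parameter $M > \max_{\bK} f$, introduce the lifted set
\[
\bK'_M := \{(x,s) \in \bK \times \oR : f(x) \leq s \leq M\} \subseteq \oR^{n+1}.
\]
Convexity of $f$ makes $\bK'_M$ convex, and it is manifestly compact with nonempty interior, hence a convex body in $\oR^{n+1}$. Apply Theorem~\ref{thm:Kallai-Vempala} in dimension $n+1$ to the linear coordinate function $g(x,s) := s$ (corresponding to the unit vector $e_{n+1}$). Since $\min_{\bK'_M} g = \min_{x \in \bK} f(x) = \fminK$, this yields
\[
\mathbb{E}_{(X,S) \sim P_{g/t}}[g(X,S)] - \fminK \leq (n+1)t
\]
for every such $M$.

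The key computational step is to identify the left-hand side, in the limit $M \to \infty$, with $\mathbb{E}_{X \sim P_{f/t}}[f(X)] + t$. Writing the numerator and denominator of the Boltzman expectation on $\bK'_M$ via Fubini's theorem and using the elementary identities
\[
\int_a^\infty e^{-s/t}\,ds = t\,e^{-a/t}, \qquad \int_a^\infty s\,e^{-s/t}\,ds = (at + t^2)\,e^{-a/t},
\]
a direct calculation gives
\[
\lim_{M\to\infty} \mathbb{E}_{(X,S) \sim P_{g/t}}[g(X,S)] \;=\; \frac{\int_\bK f(x)\, e^{-f(x)/t}\,dx}{\int_\bK e^{-f(x)/t}\,dx} + t \;=\; \mathbb{E}_{X \sim P_{f/t}}[f(X)] + t.
\]
Since the bound $(n+1)t$ is independent of $M$, passing to the limit yields
\[
\mathbb{E}_{X \sim P_{f/t}}[f(X)] + t - \fminK \leq (n+1)t,
\]
and subtracting $t$ from both sides gives the desired inequality.

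No serious obstacle is anticipated: convexity of $f$ is used only to ensure that the epigraph lift $\bK'_M$ is convex, and the limit $M \to \infty$ is handled by dominated convergence applied to the explicit one-dimensional inner integrals above. It is pleasing that the ``extra'' additive $t$ arising from marginalizing the lifted coordinate exactly absorbs the $+1$-dimensional overhead of working in $\oR^{n+1}$, so that the sharp rate $nt$ is recovered.
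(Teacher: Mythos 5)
Your proof is correct and follows essentially the same route as the paper's: an epigraph lift of $f$ over $\bK$, an application of Theorem~\ref{thm:Kallai-Vempala} to the linear coordinate function on the lifted convex body, and the identification of the lifted Boltzman expectation with $\mathbb{E}_{X\sim P_{f/t}}[f(X)]+t$, so that the extra $+t$ exactly cancels the $+1$ in dimension. The only difference is that the paper caps the lifted set at the height $E_\bK=\mathbb{E}_{X\sim P_{f/t}}[f(X)]$, which makes the identity $E_{\widehat\bK}=E_\bK+t$ exact by a direct integration by parts, whereas you cap at an arbitrary $M>\max_{\bK}f$ and pass to the limit $M\to\infty$; both versions are valid.
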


\begin{proof}
Set
 $$E_\bK:= \mathop{\mathbb{E}}_{X \sim P_{f/t  }}[f(X)]
= {\int_\bK f(x)e^{-f(x)/t} dx\over \int_\bK e^{-f(x)\over t} dx}.$$
Then  we have $$\fminK=\min_{x\in \bK} f(x)\le E_\bK.$$
Define the set
$$\widehat \bK:=\{(x,x_{n+1})\in \oR^{n+1}: x\in \bK,\ f(x)\le x_{n+1}\le E_\bK\}.$$
Then $\widehat \bK$ is a convex body and we have
$$\min_{x\in \bK}f(x)=\min_{(x,x_{n+1})\in \widehat \bK}  x_{n+1}.$$
Accordingly, define the parameter
$$E_{\widehat \bK}:= {\int_{\widehat \bK} x_{n+1}e^{-x_{n+1}/t} dx_{n+1}dx \over \int_{\widehat \bK} e^{-x_{n+1}/t} dx_{n+1}dx}.$$
Corollary \ref{corconvex} will follow if we show that %the following relation between $E_\bK$ and $E_{\widehat\bK}$:
\begin{equation}\label{eqEE}
E_{\widehat\bK}=E_{ \bK}+t.
\end{equation}
To this end set   $E_\bK={N_\bK\over D_\bK}$ and $E_{\widehat \bK}={N_{\widehat \bK}\over D_{\widehat \bK}}$, where we define
$$N_\bK:= \int_\bK f(x)e^{-f(x)/t} dx,\ \
D_\bK:=\int_\bK e^{-f(x)/t}dx,$$
$$N_{\widehat \bK}:= \int_{\widehat \bK} x_{n+1}e^{-x_{n+1}/t} dx_{n+1}dx,\ \
D_{\widehat \bK}:=  \int_{\widehat \bK} e^{-x_{n+1}/t} dx_{n+1}dx.$$
We work out the parameters $N_{\widehat \bK}$ and $D_{\widehat \bK}$ (taking integrations by part):
$$D_{\widehat \bK}= \int_\bK \left(\int_{f(x)}^{E_\bK} e^{-x_{n+1}/t}dx_{n+1}\right) dx = \int_\bK\left(te^{-f(x)/t} -t e^{-E_\bK/t}\right)dx = tD_\bK -t e^{-E_\bK/t}\vol(\bK),$$
\begin{eqnarray*}
N_{\widehat \bK}   &=& \int_\bK \left(\int_{f(x)}^{E_\bK} x_{n+1} e^{-x_{n+1}/t}dx_{n+1}\right) dx \\
&= & \int_\bK \left(-tE_\bK e^{-E_\bK/t}+t f(x)e^{-f(x)/t} +t \int_{f(x)}^{E_\bK} e^{-x_{n+1}/t} dx_{n+1}\right) dx \\
&=& -tE_\bK e^{-E_\bK/t}\vol(\bK) +t N_\bK +t D_{\widehat \bK}.
\end{eqnarray*}
Then, using the fact that $E_\bK={N_\bK\over D_\bK}$,  we obtain:
$${N_{\widehat \bK}\over D_{\widehat \bK}}= t + {N_\bK-E_\bK e^{-E_\bK/t}\vol(K)\over D_\bK -e^{-E_\bK/t}\vol(K)} = t +{N_\bK\over D_\bK},
$$
which proves relation (\ref{eqEE}).

We can now derive the result of Corollary \ref{corconvex}.
Indeed, using Theorem 2 applied to $\widehat \bK$ and the linear function $x_{n+1}$, we get
$$\mathop{\mathbb{E}}_{X \sim P_{f/t  }}[f(X)] - \min_{x \in \bK} f(x) = E_K - \min_{x\in\bK} f(x) = (E_{\widehat \bK} -\min_{(x,x_{n+1})\in \widehat \bK} x_{n+1}) + (E_{\bK} -E_{\widehat \bK}) \le t(n+1) -t =tn.$$
\mbox{}\hfill\qed
\end{proof}

\medskip

The bound in the corollary is tight asymptotically, as the following example shows.

\medskip
\begin{example}
\label{ex:tight bound}
Consider  the univariate problem $\min_x \{x \; | \; x \in [0,1]\}$.
Thus, in this case, $f(x) = x$, $\bK = [0,1]$ and  $\min_{x \in \bK} f(x)=0$.
For given temperature $t>0$, we have
\begin{eqnarray*}
 \mathop{\mathbb{E}}_{X \sim P_{f/t  }}[f(X)] - \min_{x \in \bK} f(x) = \frac{\int_0^1 x e^{-x/t}dx}{\int_0^\ell  e^{-x/t}dx} - 0
 = t-{e^{-1/t}\over 1-e^{-1/t}}  \sim  t \ \mbox{ for small } t.\\
 %t\frac{\int_0^{\ell/t} x e^{-x}dx}{\int_0^{\ell/t}  e^{-x}dx} \\
% &\sim & \frac{t \Gamma(2)}{\Gamma(1)} = t \mbox{ for } \ell/t \gg 0,
  \end{eqnarray*}
 % where $\Gamma(z)= \int_0^{\infty} x^{z-1} e^{-x}dx$ is the Euler gamma function.  \qed
\end{example}

\section{Main results}
We will prove the following relationship between the sum-of-squares based upper bound (\ref{fundr}) of Lasserre and the bound  (\ref{eqBoltz})
%$ \mathbb{E}_{X \sim P_{f/t}}[f(X)] $
based on simulated annealing.

\begin{theorem}
\label{thm:main}
Let $f$ be a polynomial of degree $d$,  let  $\bK$ be a compact set and set $\tfmax=\max_{x\in \bK} |f(x)|.$ Then we have
\[
\underline{f}^{(rd)}_{\mathbf{K}} \le  \mathop{\mathbb{E}}_{X \sim P_{f/t  }}[f(X)]  + {\tfmax\over 2^r} \ \ \mbox{ for any integer } \  r \ge {e\cdot \tfmax \over t}\ \mbox{ and any } \ t>0.
%c/t
\]
%for some constant $c>0$ that does not depend on $r$ or $t$.
\end{theorem}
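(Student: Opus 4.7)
The plan is to exhibit an explicit admissible density $h \in \Sigma[x]_{rd}$ whose associated ratio $\int_\bK f h\, dx/\int_\bK h\, dx$ is within $\tfmax/2^r$ of the Boltzmann expectation $\mathbb{E}_{X\sim P_{f/t}}[f(X)]$. The natural candidate is a ``squared truncated Taylor polynomial'' approximation of $\sqrt{e^{-f/t}} = e^{-f/(2t)}$. Specifically, I would set
$$\phi_r(x) := \sum_{k=0}^r \frac{1}{k!}\left(-\frac{f(x)}{2t}\right)^k \in \oR[x]_{rd},$$
the order-$r$ Taylor polynomial of $u \mapsto e^{-u}$ evaluated at $u = f(x)/(2t)$, and take $h := \phi_r^2 \in \Sigma[x]_{rd}$ as my candidate. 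Dividing the argument by $2t$ (rather than $t$) ensures that $h = \phi_r^2$ converges to $\bigl(e^{-f/(2t)}\bigr)^2 = e^{-f/t}$ as $r\to\infty$, while squaring $\phi_r$ guarantees the sum-of-squares property. In particular $\underline{f}^{(rd)}_\bK \le \int_\bK f h\, dx / \int_\bK h\, dx$.

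Next, I would control the Taylor remainder pointwise on $\bK$. Writing $U := \tfmax/(2t)$, for $u = f(x)/(2t) \in [-U,U]$ the Lagrange form of the remainder combined with Stirling's inequality $(r+1)! \ge ((r+1)/e)^{r+1}$ yields, under the hypothesis $r \ge e\tfmax/t = 2eU$, a uniform bound of the form $|\phi_r(x) - e^{-f(x)/(2t)}| \lesssim 1/2^{r+1}$ on $\bK$. Via the factorization $\phi_r^2 - e^{-f/t} = (\phi_r - e^{-f/(2t)})(\phi_r + e^{-f/(2t)})$, this transfers to a uniform pointwise bound on $|h(x) - e^{-f(x)/t}|$.

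Finally, using that $\int_\bK \bigl(f - \mathbb{E}_{P_{f/t}}[f]\bigr)\, e^{-f/t} dx = 0$, I would rewrite
$$\frac{\int_\bK f h\, dx}{\int_\bK h\, dx} - \mathbb{E}_{P_{f/t}}[f] = \frac{\int_\bK (f - \mathbb{E}_{P_{f/t}}[f])(h - e^{-f/t})\, dx}{\int_\bK h\, dx},$$
and bound the right-hand side via the pointwise control on $|h - e^{-f/t}|$, the trivial estimate $|f - \mathbb{E}_{P_{f/t}}[f]| \le 2\tfmax$, and a lower bound on $\int_\bK h\, dx$. The main obstacle is arranging the constants so that the final bound is cleanly $\tfmax/2^r$, without spurious exponential factors in $\tfmax/t$. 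I expect this to require exploiting the scale invariance of the ratio under $h \mapsto c h$ together with a preliminary reduction (via the shift $f \mapsto f - f_{\min,\bK}$, which leaves both sides of the target inequality invariant) to the case $f \ge 0$ on $\bK$: in that regime the Lagrange remainder is controlled by the clean expression $(f(x)/(2t))^{r+1}/(r+1)!$ alone, without the $e^U$ factor that plagues the general Taylor bound.
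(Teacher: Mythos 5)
Your construction is a genuinely different route from the paper's: the paper does not square anything, but instead invokes a lemma from \cite{KLS MPA} stating that the \emph{even-order} truncation $\phi_{2r}(\lambda)=\sum_{k=0}^{2r}(-\lambda)^k/k!$ of the Taylor series of $e^{-\lambda}$ is \emph{already} a sum of squares, and uses $\phi_{2r}(f(x)/t)$ directly as the (normalized) density. That lemma is exactly the ingredient your proposal replaces by the squaring trick, and this substitution is where the argument breaks quantitatively. Your $h=\phi_r^2$ has the right degree $2rd$, but it only matches the Taylor expansion of $e^{-f/t}$ to order $r$, not $2r$: the pointwise error $|h-e^{-f/t}|$ is of order $u^{r+1}/(r+1)!\approx 2^{-r}$ (with $u=f/(2t)\le r/(2e)$), whereas the paper's density has error $\lambda^{2r+1}/(2r+1)!\approx 4^{-r}$. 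This factor matters because the final step in either argument must divide by $\int_{\bK}e^{-f(x)/t}\,dx$ (or by $\int_\bK h$), which in the worst case is only $\vol(\bK)e^{-\tfmax/t}\ge \vol(\bK)e^{-r/e}$ under the hypothesis $r\ge e\tfmax/t$. The paper absorbs the resulting factor $e^{r/e}\approx(1.44)^r$ into its $4^{-r}$ and still lands below $2^{-r}$ (indeed $(e^{1/e}/4)^r<2^{-r}$). With only $2^{-r}$ of room, you are left with something like $C\tfmax\,(e^{1/(2e)}/2)^r\approx C\tfmax\,(0.6)^r$, which is exponentially small but does not give the stated constant $\tfmax/2^r$; no rescaling of $h$ helps, since the ratio $\int fh/\int h$ is scale invariant and the loss sits in the ratio $\int_\bK e^{-f/(2t)}dx\,/\int_\bK e^{-f/t}dx$, which can genuinely be of order $e^{\tfmax/(2t)}$.

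Two smaller points. First, the reduction $f\mapsto f-f_{\min,\bK}$ does leave the difference $\underline{f}^{(rd)}_{\mathbf{K}}-\mathbb{E}_{X\sim P_{f/t}}[f(X)]$ invariant, but it does \emph{not} leave the right-hand side invariant: $\max_\bK|f-f_{\min,\bK}|=f_{\max}-f_{\min}$ can be as large as $2\tfmax$, so both the error term and the admissible range of $r$ degrade by another factor of $2$. Second, your symmetric identity forces you to lower-bound $\int_\bK h$ separately; the paper sidesteps this by using only the one-sided inequality $\phi_{2r}(\lambda)\ge e^{-\lambda}$ (so the normalizing integral of its density is at least $\int_\bK e^{-f/t}dx$ for free) and by integrating the error against the nonnegative function $f-f_{\min,\bK}$. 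In summary: your scheme would prove a statement of the same character, namely $\underline{f}^{(rd)}_{\mathbf{K}}\le \mathbb{E}_{X\sim P_{f/t}}[f(X)]+C\tfmax\rho^{\,r}$ for some $\rho\in(1/2,1)$, which suffices for the corollaries, but to recover the theorem as stated you need the stronger fact that $\phi_{2r}$ itself is a sum of squares.
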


For the problem of  minimizing a convex polynomial  function over a convex body, we obtain the following improved convergence rate for the sum-of-squares based bounds of Lasserre.
%combining with Theorem \ref{thm:Kallai-Vempala}, one has the following corollary.

\begin{corollary}\label{cor:main}
Let $f\in \oR[x]$ be a convex polynomial  of degree $d$ and let $\bK$ be a convex body. Then for any integer $r\ge 1$ one has
\[
	 \underline{f}^{(rd)}_{\mathbf{K}} - \min_{x \in \bK} f(x) \leq  \frac{c}{r},
\]
for some constant $c>0$ that does not depend on $r$.
(For instance, $c=(ne+1)\tfmax$.)
\end{corollary}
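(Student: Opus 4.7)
The plan is to combine the two ingredients that have already been set up in the paper: Theorem~\ref{thm:main}, which relates the Lasserre bound $\underline{f}^{(rd)}_{\mathbf{K}}$ to the simulated-annealing bound $\mathbb{E}_{X\sim P_{f/t}}[f(X)]$ with an additive geometric error, and Corollary~\ref{corconvex}, which (using convexity of $f$ and $\bK$) controls the simulated-annealing bound linearly in the temperature $t$. The convex assumption enters only through Corollary~\ref{corconvex}; Theorem~\ref{thm:main} is used as a black box and needs no convexity.

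The first step is to fix an arbitrary integer $r\ge 1$ and choose the temperature $t$ so as to saturate the hypothesis of Theorem~\ref{thm:main}. The natural choice is $t := e\,\tfmax/r$, since this is the smallest $t$ for which the condition $r\ge e\,\tfmax/t$ holds. With this choice Theorem~\ref{thm:main} gives
\[
\underline{f}^{(rd)}_{\mathbf{K}} \;\le\; \mathbb{E}_{X\sim P_{f/t}}[f(X)] \;+\; \frac{\tfmax}{2^{r}}.
\]

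The second step is to apply Corollary~\ref{corconvex} to the convex polynomial $f$ on the convex body $\bK$ at the same temperature $t$, which yields
\[
\mathbb{E}_{X\sim P_{f/t}}[f(X)] - \min_{x\in\bK} f(x) \;\le\; n\,t \;=\; \frac{n\,e\,\tfmax}{r}.
\]
Adding the two inequalities and using the elementary bound $2^{-r}\le 1/r$ valid for all $r\ge 1$ gives
\[
\underline{f}^{(rd)}_{\mathbf{K}} - \min_{x\in\bK} f(x) \;\le\; \frac{n\,e\,\tfmax}{r} + \frac{\tfmax}{2^{r}} \;\le\; \frac{(ne+1)\,\tfmax}{r},
\]
which is the stated conclusion with $c=(ne+1)\tfmax$.

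There is no real obstacle: once the two previous results are in place, the proof is just a one-parameter optimization in $t$, and the two error terms combine cleanly because the annealing error is linear in $t$ while the Lasserre--annealing gap is $O(1/2^{r})$ subject to a lower bound on $r/t$. The mild subtlety is simply to recognise that the constraint $r\ge e\tfmax/t$ in Theorem~\ref{thm:main} forces $t$ to shrink no faster than $\Theta(1/r)$, which in turn is what produces the $O(1/r)$ rate rather than something better; balancing the two error sources is therefore trivial and not a true trade-off.
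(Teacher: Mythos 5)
Your proof is correct and follows essentially the same route as the paper: the same choice $t = e\,\tfmax/r$, the same application of Theorem~\ref{thm:main}, and the same linear-in-$t$ annealing bound, combined via $2^{-r}\le 1/r$. (If anything, you are slightly more careful than the paper's own write-up, which nominally cites Theorem~\ref{thm:Kallai-Vempala} where the general convex case of Corollary~\ref{corconvex} is what is actually needed.)
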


\begin{proof}
Let $r\ge 1$ and set $t={e\cdot \tfmax\over r}$. Combining Theorems \ref{thm:Kallai-Vempala} and \ref{thm:main}, we get
\begin{eqnarray*}
 \underline{f}^{(rd)}_{\mathbf{K}} - \min_{x \in \bK} f(x) &=&
\big( \underline{f}^{(rd)}_{\mathbf{K}} -  \mathop{\mathbb{E}}_{X \sim P_{f/t  }}[f(X)] \big)
+\big( \mathop{\mathbb{E}}_{X \sim P_{f/t  }}[f(X)] -\fminK\big) \\
&\leq& {\tfmax\over 2^r}+nt = {\tfmax\over 2^r} + {ne\cdot \tfmax\over r}  \le {(ne+1)\tfmax\over r}.
\end{eqnarray*}
\hfill\qed\end{proof}

For  convex polynomials $f$, this improves on the known $O(1/\sqrt{r})$ result from Theorem \ref{thm:dKLS}. % (under the additional convexity assumptions).
One may in fact use the last corollary to obtain the same rate of convergence in terms of $r$ for all polynomials, without the
convexity assumption, as we will now show.

\begin{corollary}
\label{cor:nonconvex}
If $f$ be a polynomial and $\bK$ a convex body, then there is a $c > 0$ depending on $f$ and $\bK$ only, so that
\[
\underline{f}^{(2r)}_{\mathbf{K}} - \min_{x \in \bK} f(x) \le \frac{c}{r}.
\]
A suitable value for $c$ is
\[
c = (ne+1)\left(f_{\min,\bK} +C^1_f\cdot \mbox{diam}(\bK) + C^2_f \cdot \mbox{diam}(\bK)^2\right),
\]
where
$C^1_f = \max_{x \in \bK} \| \nabla f(x) \|_2$ and
$C^2_f = \max_{x \in \bK} \| \nabla^2 f(x) \|_2$.
\end{corollary}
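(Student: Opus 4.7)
The plan is to reduce the nonconvex case to the setting of Corollary~\ref{cor:main} by dominating $f$ pointwise on $\bK$ with a convex quadratic polynomial $g$ that coincides with $f$ at the minimizer. Fix $x^\star\in\arg\min_{x\in\bK}f(x)$ and set
\[
g(x):=f(x^\star)+\nabla f(x^\star)^T(x-x^\star)+\tfrac{C^2_f}{2}\,\|x-x^\star\|_2^2,
\]
which is convex (since $C^2_f\ge 0$) and of degree at most $2$. Because $\bK$ is convex, Taylor's theorem with Lagrange remainder at $x^\star$ gives, for each $x\in\bK$, some $\xi$ on the segment $[x^\star,x]\subseteq\bK$ with $f(x)=f(x^\star)+\nabla f(x^\star)^T(x-x^\star)+\tfrac{1}{2}(x-x^\star)^T\nabla^2 f(\xi)(x-x^\star)$; bounding the Hessian term by $\tfrac{C^2_f}{2}\|x-x^\star\|_2^2$ yields $f\le g$ on $\bK$. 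Combined with $g(x^\star)=f(x^\star)=\fminK$, the sandwich $\fminK\le f\le g$ on $\bK$ forces $\min_{\bK}g=\fminK$.

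Next I would invoke the monotonicity $\underline{f}^{(2r)}_\bK\le \underline{g}^{(2r)}_\bK$, which is immediate from~(\ref{fundr}): for any $h\in\Sigma[x]_{2r}$ with $\int_\bK h\,dx=1$, the density $h$ is nonnegative and $f\le g$ on $\bK$, so $\int_\bK h f\,dx\le\int_\bK h g\,dx$. Applying Corollary~\ref{cor:main} to the convex polynomial $g$ of degree $2$ then supplies
\[
\underline{f}^{(2r)}_\bK-\fminK\ \le\ \underline{g}^{(2r)}_\bK-\min_{\bK}g\ \le\ \frac{(ne+1)\,\max_{x\in\bK}|g(x)|}{r}.
\]
A triangle-inequality estimate with $\|x-x^\star\|_2\le\mathrm{diam}(\bK)$ gives $\max_{x\in\bK}|g(x)|\le |\fminK|+C^1_f\,\mathrm{diam}(\bK)+\tfrac{1}{2}C^2_f\,\mathrm{diam}(\bK)^2$, which recovers the claimed value of $c$ up to harmless simplifications (dropping the absolute value and the factor $\tfrac12$, both of which can be absorbed after a WLOG shift of $f$ by a constant, an operation that leaves $\underline{f}^{(2r)}_\bK-\fminK$ invariant).

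No step is particularly delicate, but the one requiring genuine care is the equality $\min_{\bK}g=\fminK$, for which both halves of the sandwich ($g\ge f$ everywhere on $\bK$ and $g(x^\star)=f(x^\star)$) are essential; without the first inequality $\min_\bK g$ could dip below $\fminK$, and without the second it could exceed it. Everything else is a routine Taylor estimate plus the monotonicity of the Lasserre bounds in the objective, so I expect no serious obstacle beyond correctly tracking constants.
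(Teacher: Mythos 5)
Your proof is correct and follows essentially the same route as the paper: both arguments construct a convex quadratic majorant of $f$ by Taylor expansion at the minimizer (with the Hessian term controlled by $C^2_f\,\|x-x^\star\|_2^2$), use the monotonicity of the Lasserre bound in the objective, and then apply Corollary~\ref{cor:main} to the degree-$2$ majorant. If anything, your version is slightly more careful than the paper's, since you retain the factor $\tfrac12$ in the Taylor remainder and use $\max_{x\in\bK}|g(x)|$ (rather than $\max_{x\in\bK} g(x)$) when invoking Corollary~\ref{cor:main}, addressing the possible sign of $f_{\min,\bK}$.
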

%\proof
We first define a convex quadratic function $q$ that upper bounds $f$ on $\bK$ as follows:
\[
q(x) = f(a) + \nabla f(a)^\top (x-a) + C^2_f \|x-a\|_2^2,
\]
where $C^2_f = \max_{x \in \bK} \| \nabla^2 f(x) \|_2$, and $a$ is the minimizer of $f$ on $\bK$.
Note that $q(x) \ge f(x)$ for all $ x \in \bK$ by Taylor's theorem, and
$\min_{x \in \bK} q(x) = f(a)$.

By definition of the Lasserre hierarchy,
\begin{eqnarray*}
\underline{f}^{(2r)}_{\mathbf{K}} &:=&\inf_{h\in\Sigma[x]_{2r}}\int_{\mathbf{K}}h(x)f(x)dx \ \ \mbox{s.t. $\int_{\mathbf{K}}h(x)dx=1$}\\
           &\le&\inf_{h\in\Sigma[x]_{2r}}\int_{\mathbf{K}}h(x)q(x)dx \ \ \mbox{s.t. $\int_{\mathbf{K}}h(x)dx=1$} \\
           &\equiv& \underline{q}_{\bK}^{(2r)}.
\end{eqnarray*}
Invoking Corollary \ref{cor:main} and using that the degree of $q$ is $2$, we obtain:
\[
\underline{f}^{(2r)}_{\mathbf{K}} \le \underline{q}_{\bK}^{(2r)} \le f(a) + \frac{(ne+1)\hat q_{\max}}{r},
  \]
where $\hat q_{\max} = \max_{x \in \bK} q(x)\le f_{\min,\bK} +C^1_f\cdot \mbox{diam}(\bK) + C^2_f \cdot \mbox{diam}(\bK)^2$.\qed

The last result improves on the known $O\left(\frac{1}{\sqrt{r}} \right)$ rate in Theorem \ref{thm:dKLS}.

\subsection*{Proof of Theorem \ref{thm:main}}

The key idea in the proof of Theorem \ref{thm:main} is to replace the Boltzman density function by a polynomial approximation.

To this end, we first recall a basic result on approximating the exponential function by its truncated Taylor series.

\begin{lemma}[De Klerk, Laurent and Sun \cite{KLS MPA}]\label{lemf2rsos}
Let $\phi_{2r}(\lambda)$ denote the (univariate) polynomial of degree $2r$ obtained by truncating the Taylor series expansion of $e^{-\lambda}$  at the order $2r$. That is,
\begin{equation*}\label{phi2rt}
\phi_{2r}(\lambda):=\sum_{k=0}^{2r}{(-t)^k\over k!}.
\end{equation*}
Then  $\phi_{2r}$ is a sum of squares of polynomials.
Moreover, we have
\begin{equation}\label{fmf2r}
0\le \phi_{2r}(\lambda) - e^{-\lambda} \le {\lambda^{2r+1}\over (2r+1)!} \quad \mbox{ for all } \lambda\ge 0.
%\phi_{2r}(\lambda)-\phi(\lambda)=e^{-\xi}\frac{t^{2r+1}}{(2r+1)!}\ge0,\ \ \mbox{for some $\xi\in [0,t]$}.
\end{equation}
\end{lemma}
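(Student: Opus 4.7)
The plan is to handle the two assertions of Lemma~\ref{lemf2rsos} separately: first the approximation bound $0\le \phi_{2r}(\lambda) - e^{-\lambda}\le \lambda^{2r+1}/(2r+1)!$ for $\lambda\ge 0$, and then the sum-of-squares property.

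For the approximation bound, the natural tool is Taylor's theorem with the Lagrange form of the remainder, applied to $f(\lambda)=e^{-\lambda}$ expanded about the origin. Since $f^{(k)}(\lambda)=(-1)^k e^{-\lambda}$, the Lagrange remainder at order $2r$ is $f^{(2r+1)}(\xi)\lambda^{2r+1}/(2r+1)! = -e^{-\xi}\lambda^{2r+1}/(2r+1)!$ for some $\xi$ between $0$ and $\lambda$. Rearranging $e^{-\lambda} = \phi_{2r}(\lambda) - e^{-\xi}\lambda^{2r+1}/(2r+1)!$ yields
\[
\phi_{2r}(\lambda) - e^{-\lambda} \;=\; \frac{e^{-\xi}\,\lambda^{2r+1}}{(2r+1)!}.
\]
For $\lambda \ge 0$ one has $\xi\in[0,\lambda]$ and therefore $e^{-\xi}\in[e^{-\lambda},1]$, from which the lower bound $0$ and the upper bound $\lambda^{2r+1}/(2r+1)!$ drop out simultaneously.

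For the sum-of-squares property, my plan is to invoke the classical fact that every nonnegative univariate real polynomial is already a sum of (at most two) squares: factor over $\mathbb{C}$, observe that real roots of a nonnegative polynomial must occur with even multiplicity, group the remaining complex-conjugate root pairs into irreducible quadratic factors of the form $(\lambda-a)^2+b^2$, and combine the factors using the Brahmagupta--Fibonacci identity $(u_1^2+v_1^2)(u_2^2+v_2^2)=(u_1u_2-v_1v_2)^2+(u_1v_2+u_2v_1)^2$. It therefore suffices to check that $\phi_{2r}(\lambda)\ge 0$ for every $\lambda\in\oR$. For $\lambda\ge 0$ the approximation bound just proved gives $\phi_{2r}(\lambda)\ge e^{-\lambda}>0$, and for $\lambda\le 0$ one rewrites $\phi_{2r}(\lambda)=\sum_{k=0}^{2r}|\lambda|^k/k!$, a sum of nonnegative terms whose $k=0$ summand is already $1$, so $\phi_{2r}(\lambda)\ge 1$.

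The argument is essentially routine; the only point requiring any care is tracking the sign $f^{(2r+1)}(\xi)=-e^{-\xi}$ correctly when writing down the Lagrange remainder, since that sign is precisely what makes both the lower and the upper bound in the statement fall out of a single Taylor expansion, and what feeds directly into the pointwise positivity check used for the sum-of-squares conclusion.
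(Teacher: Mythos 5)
Your proof is correct, and it is essentially the standard argument: the paper itself does not reprove this lemma (it is quoted from \cite{KLS MPA}), and the proof there proceeds exactly as you do --- Taylor's theorem with the Lagrange remainder $-e^{-\xi}\lambda^{2r+1}/(2r+1)!$ gives both inequalities in \eqref{fmf2r} at once, and strict positivity of $\phi_{2r}$ on all of $\oR$ (via the remainder for $\lambda\ge 0$ and termwise positivity for $\lambda\le 0$) combined with the fact that nonnegative univariate polynomials are sums of two squares gives the s.o.s.\ claim. (You also correctly read through the typo in the displayed definition, where $(-t)^k$ should be $(-\lambda)^k$.)
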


We now define the following approximation of the Boltzman density $P_{f/t}$:
\begin{equation}\label{eq:density}
\varphi_{2r,t}(x) := \frac{\phi_{2r}(f(x)/t)}{\int_{\bK} \phi_{2r}(f(x)/t)dx}.
\end{equation}
By construction,  $\varphi_{2r,t}$ is a sum-of-squares polynomial probability density function on $\bK$, with degree $2rd$ if $f$ is a polynomial of degree $d$.
Moreover, by relation \eqref{fmf2r} in Lemma \ref{lemf2rsos}, we obtain
\begin{eqnarray}
\varphi_{2r,t}(x) & \le & \frac{\phi_{2r}(f(x)/t)}{\int_{\bK} \exp(-f(x)/t)dx}  \\
           %       & \le & \frac{\exp(-f(x)/t) + {(f(x)/t)^{2r+1}\over (2r+1)!}}{\int_{\bK} \exp(-f(x)/t)dx} \\
               & \le & P_{f/t}(x) + \frac{ {(f(x)/t)^{2r+1} }}{(2r+1)!\int_{\bK} \exp(-f(x)/t)dx}.\label{eqphi}
\end{eqnarray}
From this we can derive  the following result.

\begin{lemma}
\label{lemma:err}
For any continuous $f$ and scalar $t>0$ one has
\begin{equation}
\label{eq:last term}
\underline{f}^{(rd)}_{\mathbf{K}} \le\int_{\bK}f(x)\varphi_{2r,t}(x)dx \le \mathop{\mathbb{E}}_{X \sim P_{f/t  }}[f(X)] +
 \frac{\int_{\bK}(f(x)-\fminK) (f(x))^{2r+1}dx}{t^{2r+1}(2r+1)!\int_{\bK} \exp(-f(x)/t)dx}.
\end{equation}
\end{lemma}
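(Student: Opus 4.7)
The plan is to handle the two inequalities separately; both reduce quickly to bookkeeping once one makes the right observation about the density $\varphi_{2r,t}$.

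\textbf{First inequality.} Since $\phi_{2r}$ is a sum of squares (Lemma \ref{lemf2rsos}) and $f$ has degree $d$, the numerator $\phi_{2r}(f(x)/t)$ in (\ref{eq:density}) is a sum of squares of polynomials in $x$ of degree at most $2rd$; the denominator is a positive scalar, so $\varphi_{2r,t}\in\Sigma[x]_{rd}$. By construction $\int_\bK \varphi_{2r,t}(x)\,dx=1$, hence $\varphi_{2r,t}$ is feasible for the program defining $\underline{f}^{(rd)}_{\mathbf{K}}$ in (\ref{fundr}), which directly yields $\underline{f}^{(rd)}_{\mathbf{K}}\le \int_\bK f(x)\varphi_{2r,t}(x)\,dx$.

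\textbf{Second inequality.} The key identity is that both $\varphi_{2r,t}$ and $P_{f/t}$ are probability densities on $\bK$, so subtracting the constant $\fminK$ gives
\[
\int_{\bK} f(x)\varphi_{2r,t}(x)\,dx - \mathop{\mathbb{E}}_{X \sim P_{f/t}}[f(X)] \;=\; \int_{\bK} (f(x)-\fminK)\bigl(\varphi_{2r,t}(x)-P_{f/t}(x)\bigr)\,dx.
\]
Since $f(x)-\fminK\ge 0$ for all $x\in\bK$, I can apply the pointwise upper bound (\ref{eqphi}) inside the integral, which controls $\varphi_{2r,t}(x)-P_{f/t}(x)$ by $\frac{(f(x)/t)^{2r+1}}{(2r+1)!\int_\bK e^{-f/t}dx}$. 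Pulling the constants $t^{-(2r+1)}$ and $(2r+1)!^{-1}$ out of the integral reproduces exactly the error term on the right of (\ref{eq:last term}).

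\textbf{Main obstacle.} The only delicate point is that (\ref{eqphi}) rests on Lemma \ref{lemf2rsos}, which provides the Taylor-remainder bound only for nonnegative arguments $\lambda=f(x)/t\ge 0$. So I would either assume $f\ge 0$ on $\bK$ at this stage (a harmless normalization since one can always replace $f$ by $f-\fminK$ in the proof of Theorem \ref{thm:main} without changing the bounds being compared) or argue directly from the derivation of (\ref{eqphi}) that the inequality holds under the standing assumption $f\ge 0$. Once this positivity is in place, the two displays above combine to give the stated chain of inequalities with no further computation.
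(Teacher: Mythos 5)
Your proof is correct and follows essentially the same route as the paper: the first inequality is feasibility of $\varphi_{2r,t}$ for the program (\ref{fundr}), and the second is obtained by multiplying the pointwise bound (\ref{eqphi}) by the nonnegative quantity $f(x)-\fminK$ and integrating --- the paper phrases this by adding and subtracting $\fminK$ rather than writing the difference of the two densities, but it is the same computation. The ``obstacle'' you flag is real but is an implicit assumption in the paper's own presentation rather than a defect of your argument: the derivation of (\ref{eqphi}) uses $\phi_{2r}(\lambda)\ge e^{-\lambda}$, which Lemma \ref{lemf2rsos} guarantees only for $\lambda\ge 0$, so the lemma as stated for ``any continuous $f$'' tacitly needs $f\ge 0$ on $\bK$, and your normalization $f\mapsto f-\fminK$ is the standard harmless fix for the way the lemma is used in Theorem \ref{thm:main}.
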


\begin{proof}
As $\varphi_{2r,t}(x)$ is a polynomial of degree $2rd$ and a probability density function on $\bK$ (by (\ref{eq:density})), we have:
\begin{equation}\label{eq0}
\underline{f}^{(rd)}_{\mathbf{K}} \le \int_\bK f(x)\varphi_{2r,t}(x) dx =\int_\bK (f(x)-\fminK) \varphi_{2r,t}(x)dx + \fminK.
\end{equation}
Using the above inequality (\ref{eqphi}) for $\varphi_{2r,t}(x)$ we can upper bound the integral on the right hand side:
\begin{eqnarray*}
\int_K (f(x)-\fminK) \varphi_{2r,t}(x)dx & \le
& \int_\bK (f(x)-\fminK)P_{f/t}(x)dx +
\int_\bK { (f(x)-\fminK) (f(x)/t)^{2r+1}\over (2r+1)! \int_K \exp(-f(x)/t)dx } dx\\
& = &  \mathop{\mathbb{E}}_{X \sim P_{f/t  }}[f(X)] - \fminK + \int_\bK { (f(x)-f_{\min}) (f(x)/t)^{2r+1}\over (2r+1)! \int_K \exp(-f(x)/t)dx } dx.
\end{eqnarray*}
Combining with the inequality (\ref{eq0}) gives the desired result.\qed
\end{proof}

%\section{Proof of Theorem \ref{thm:main}}
\medskip
We now proceed to the proof of Theorem \ref{thm:main}. In view of Lemma \ref{lemma:err}, we only need to bound the last right-hand-side term in \eqref{eq:last term}:
$$T:=  \frac{\int_{\bK}(f(x)-f_{\min}) (f(x))^{2r+1}dx}{t^{2r+1}(2r+1)!\int_{\bK} \exp(-f(x)/t)dx}$$
and to show that $T\le {\tfmax\over 2^r}$.

%\begin{eqnarray*}
%\frac{\int_{\bK}(f(x))^{2(r+1)}dx}{t^{2r+1}(2r+1)!\int_{\bK} \exp(-f(x)/t)dx} &=&
%\frac{\mathbb{E}_{X \sim  P_{f/t}} [f^{2(r+1)}(X)\exp(f(X)/t)]}{t^{2r+1}(2r+1)!} \\
%&\le &
%\frac{\mathbb{E}_{X \sim  P_{f/t}} [f^{2(r+1)}(X)\exp(f(X)/t)]\cdot \exp(2r+1)}{\sqrt{2\pi}\sqrt{2r+1}(t(2r+1))^{2r+1}} \;\;\; \mbox{ (by \eqref{ineq:Stirling})}\\
%&\le &
% \frac{f_{\max,\bK}\cdot\exp(f_{\max,\bK}/t)}{\sqrt{2\pi}\sqrt{2r+1}}\left(\frac{f_{\max,\bK}\cdot e}{t(2r+1)}\right)^{2r+1}. \\
%\end{eqnarray*}
%As $|f(x)|\le \tfmax$ on $\bK$
By the defininition of $\tfmax$ we have
$$(f(x)-f_{\min})(f(x))^{2r+1}\le 2 \tfmax^{2(r+1)}\ \mbox{ and } \   \exp(-f(x)/t) \ge \exp(\tfmax/t) \ \mbox{ on } \bK,$$
which implies
$$ T \le {2 \tfmax^{2(r+1)} \exp(\tfmax/t) \over t^{2r+1}(2r+1)!}.$$
Combining with the
Stirling approximation inequality,
\[
r! \ge \sqrt{2\pi r}\left(\frac{r}{e}\right)^r  \quad\quad\quad (r \in \mathbb{N}),
\]
 applied to $(2r+1)!$, we obtain:
$$T
\le
{2\tfmax\over \sqrt{2\pi(2r+1)} } \left({\tfmax e \over t(2r+1)}\right)^{2r+1} \exp(\tfmax/t).$$
Consider   $r\ge {e\cdot \tfmax \over t}$, so that $\tfmax/t \le r/e$. Then, using the fact that $r/(2r+1)\le 1/2$,  we obtain
\begin{eqnarray*}
%\frac{\int_{\bK}(f(x))^{2(r+1)}dx}{t^{2r+1}(2r+1)!\int_{\bK} \exp(-f(x)/t)dx}
T &\le&
\frac{2\tfmax}{\sqrt{2\pi}}\frac{\exp(r/e)}{\sqrt{2r+1}}\left(\frac{r}{2r+1}\right)^{2r+1} \\
&\le&
\frac{\tfmax}{\sqrt{2\pi}}\frac{\exp(1/e)^r}{\sqrt{2r+1}}\left(\frac{1}{4}\right)^{r} \\
&=&
\frac{\tfmax}{\sqrt{2\pi}\sqrt{2r+1}}\left(\frac{\exp(1/e)}{4}\right)^r\\
&< & {\tfmax \over 2^r}.\\
\end{eqnarray*}
This concludes the proof of Theorem \ref{thm:main}.

%%%

\section{Concluding remarks}
\label{sec:conclusion}
We conclude with a numerical comparison of  the two hierarchies of bounds.
By Theorem \ref{thm:main}, it is reasonable to compare the bounds $\underline{f}^{(r)}_{\mathbf{K}}$ and
$\mathop{\mathbb{E}}_{X \sim P_{f/t  }}[f(X)]$, with $t = \frac{e\cdot d \cdot \tfmax}{r}$ and $d$ the degree of $f$.
Thus we define, for the purpose of comparison:
\[
SA^{(r)} = \mathop{\mathbb{E}}_{X \sim P_{f/t  }}[f(X)], \mbox{ with $t = \frac{e\cdot d \cdot \tfmax}{r}$}.
\]

We calculated the bounds for the polynomial test functions listed in Table \ref{tab:test}.

\begin{table}[h!]
\caption{Test functions, all with $n=2$, domain $\bK = [-1,1]^2$, and minimum $f_{\min,\bK} = 0$. \label{tab:test}}
\begin{center}
\begin{tabular}{|c|c|c|c|c|}\hline
Name & $f(x)$ & $\tfmax$ & $d$ & Convex?\\ \hline
Booth function& $(10x_1+20x_2-7)^2 + (20x_1+10x_2-5)^2$ & $2594$ & $2$ & yes \\ \hline
Matyas function& $26(x_1^2+x_2^2)-48x_1x_2$ & $100$ & $2$ & yes \\ \hline
Motzkin polynomial& $64(x_1^4x_2^2+x_1^2x_2^4) - 48x_1^2x_2^2 +1$ & $81$ & $6$ & no \\ \hline
Three-Hump Camel function& $\frac{5^6}{6}x_1^6-5^4\cdot 1.05x_1^4+50x_1^2+25x_1x_2+25x_2^2$ & $2048$ & $6$ &  no \\ \hline
\end{tabular}
 \end{center}
\end{table}

The bounds are shown in Table \ref{table:result1}.
The bounds $\underline{f}_{\mathbf{K}}^{(r)}$ were taken from \cite{DHL SIOPT},
while the bounds $SA^{(r)}$ were computed via numerical integration, in particular using the
Matlab routine {\tt sum2} of the package Chebfun \cite{chebfun}.
\begin{table}[h!]
\caption{Comparison of the upper bounds $SA^{(r)}$ and $\underline{f}_{\mathbf{K}}^{(r)}$ for the test functions.
\label{table:result1}}
\begin{center}
{\small
  %\begin{tabular}{| c | m{1.3cm} | m{1cm} | m{1.3cm} | m{1cm} | m{1.3cm} | m{1cm} | m{1.4cm} | m{1cm} |}
	\begin{tabular}{| c | >{\centering}m{1.2cm} | c | >{\centering}m{1cm} | c | >{\centering}m{1.3cm} | c | >{\centering}m{1.2cm} | c |}
    \hline
\multirow{2}{*}{{$r$}} & \multicolumn{2}{c|}{Booth Function} & \multicolumn{2}{c|}{Matyas Function} & \multicolumn{2}{m{3cm}|}{\centering Three--Hump Camel Function}& \multicolumn{2}{c|}{Motzkin Polynomial} \\ \cline{2-9}
                   & $\underline{f}_{\mathbf{K}}^{(r)}$  & $SA^{(r)}$& $\underline{f}_{\mathbf{K}}^{(r)}$  & $SA^{(r)}$  & $\underline{f}_{\mathbf{K}}^{(r)}$  & $SA^{(r)}$  & $\underline{f}_{\mathbf{K}}^{(r)}$  & $SA^{(r)}$\\ \hline
$3$  & 118.383 & 367.834   & 4.2817 &15.4212  & 29.0005&247.462   & 1.0614   &   4.0250\\ \hline
$4$  & 97.6473 & 356.113   & 3.8942 &14.8521  & 9.5806 &241.700    & 0.8294   &  3.9697\\ \hline
$5$ & 69.8174 &  345.043  & 3.6894 & 14.3143 & 9.5806  &236.102    & 0.8010   &  3.9157\\ \hline
$6$ & 63.5454 &  334.585  & 2.9956 & 13.8062 & 4.4398  &230.663    & 0.8010   &  3.8631\\ \hline
$7$ & 47.0467 &  324.701  & 2.5469 & 13.3262 & 4.4398  &225.381    & 0.7088   &  3.8118\\ \hline
$8$ & 41.6727 &  315.354  & 2.0430 & 12.8726 & 2.5503  &220.251    & 0.5655   &  3.7618\\ \hline
$9$ & 34.2140 &  306.510  & 1.8335 & 12.4441 & 2.5503  &215.269    & 0.5655   &  3.7130\\ \hline
$10$ & 28.7248 & 298.138  & 1.4784 & 12.0390 & 1.7127  &210.431    & 0.5078   &  3.6654\\ \hline
$11$ & 25.6050 & 290.206  & 1.3764 & 11.6560 & 1.7127  &205.734    & 0.4060   &  3.6190\\ \hline
$12$ & 21.1869 & 282.687  & 1.1178 & 11.2938 & 1.2775  &201.173   & 0.4060   &   3.5737\\ \hline
$13$ & 19.5588 & 275.554  & 1.0686 & 10.9511 & 1.2775  &196.745   & 0.3759   &   3.5296\\ \hline
$14$ & 16.5854 & 268.782  & 0.8742 & 10.6267 & 1.0185  &192.446   & 0.3004   &   3.4865\\ \hline
$15$ & 15.2815 & 262.348  & 0.8524 & 10.3195 & 1.0185  &188.272   & 0.3004   &   3.4444\\ \hline
$16$ & 13.4626 & 256.230  & 0.7020 & 10.0284 & 0.8434  &184.220   & 0.2819   &   3.4034\\ \hline
$17$ & 12.2075 & 250.408  & 0.6952 & 9.75250 & 0.8434  &180.287   & 0.2300   &   3.3633\\ \hline
$18$ & 11.0959 & 244.863  & 0.5760 & 9.49071 & 0.7113  &176.469   & 0.2300   &   3.3242\\ \hline
$19$ & 9.9938  & 239.577  & 0.5760 & 9.24220 & 0.7113  &172.762   & 0.2185   &   3.2860\\ \hline
$20$ & 9.2373  & 234.534  & 0.4815 & 9.00615 & 0.6064  &169.164   & 0.1817   &   3.2487\\ \hline
  \end{tabular}
  }\end{center}
  \end{table}

  The results in the table show that the bound in  Theorem \ref{thm:main} is far from tight for these examples.
  In fact, it may well be that the convergence rates of $\underline{f}_{\mathbf{K}}^{(r)}$ and $SA^{(r)}$ are different
  for convex $f$. We know that $SA^{(r)} - f_{\min,\bK}= \Theta(1/r)$ is the exact convergence rate for the simulated annealing
  bounds for convex $f$ (cf.\ Example \ref{ex:tight bound}), but it was speculated in \cite{DHL SIOPT} that one may in fact have
   $\underline{f}_{\mathbf{K}}^{(r)} - f_{\min,\bK}= O(1/r^2)$, even for non-convex $f$. Determining the exact convergence rate $\underline{f}_{\mathbf{K}}^{(r)}$ remains an open problem.

Finally, one should point out that it is not really meaningful to compare the computational complexities of computing the
two bounds $\underline{f}_{\mathbf{K}}^{(r)} $ and $SA^{(r)}$, as explained below.

For any polynomial $f$ and convex body $\bK$, $\underline{f}^{(r)}_{\mathbf{K}}$ may be computed by solving a generalised
 eigenvalue problem with matrices of order ${n+r \choose r}$, as long as the moments of the Lebesgue measure on $\bK$ are known.
The generalised eigenvalue computation may be done in $O\left({n+r \choose r}^3\right)$  operations; see \cite{KLLS MOR} for details.
Thus this is a polynomial-time procedure for fixed values of $r$.

For non-convex $f$, the complexity of computing $\mathop{\mathbb{E}}_{X \sim P_{f/t  }}[f(X)]$ is not known.
When $f$ is linear, it is shown in \cite{Abernethy_Hazan_2015} that $\mathop{\mathbb{E}}_{X \sim P_{rf  }}[f(X)]$ with $t = O(1/r)$ may be obtained
in $O^*\left(n^{4.5}\log(r) \right)$ oracle membership calls for $\bK$, where the $O^*(\cdot)$ notation suppresses logarithmic factors.

Since the assumptions on the available information is different for the two types of bounds, there is no simple way to compare these respective complexities.

%Thus the dependence of the simulated annealing bounds is much more favorable with respect to $r$, when $f$ is linear.
%Having said that, the key to the $\log(r)$ dependence on $r$ is warm-starting, and there has been no investigation
%into the possibility of using warm starting to compute $\underline{f}^{(r+1)}_{\mathbf{K}}$ if $\underline{f}^{(r)}_{\mathbf{K}}$
%has been obtained. The analogy with simulated annealing would suggest that it may well be possible to obtain a
%better procedure for computing $\underline{f}^{(r)}_{\mathbf{K}}$ using warm-starts,
%and this may result in a numerical procedure that is as practical as simulated annealing to implement.

% Appendix here
% Options are (1) APPENDIX (with or without general title) or
%             (2) APPENDICES (if it has more than one unrelated sections)
% Outcomment the appropriate case if necessary
%
% \begin{APPENDIX}{<Title of the Appendix>}
% \end{APPENDIX}
%
%   or
%
% \begin{APPENDICES}
% \section{<Title of Section A>}
% \section{<Title of Section B>}
% etc
% \end{APPENDICES}

% Acknowledgments here
%\section*{Acknowledgments.}
% Enter the text of acknowledgments here

% References here (outcomment the appropriate case)

% CASE 1: BiBTeX used to constantly update the references
%   (while the paper is being written).
%\bibliographystyle{informs2014} % outcomment this and next line in Case 1
%\bibliography{<your bib file(s)>} % if more than one, comma separated

% CASE 2: BiBTeX used to generate mypaper.bbl (to be further fine tuned)
%\input{mypaper.bbl} % outcomment this line in Case 2

\end{document}